\theoremstyle{plain}
\newtheorem{theorem}{Theorem}[section]
\newtheorem{lemma}[theorem]{Lemma}
\newtheorem{proposition}[theorem]{Proposition}
\newtheorem{corollary}[theorem]{Corollary}
\theoremstyle{definition}
\newtheorem{definition}[theorem]{Definition}
\newtheorem{remark}[theorem]{Remark}
\newtheorem*{remarkn}{Remark}
\newcommand{\C}{\mathbb{C}}
\newcommand{\R}{\mathbb{R}}
\newcommand{\p}{\partial}
\newcommand{\ddb}{\p\bar{\p}}
\newcommand{\gc}{g_{\beta}}
\DeclareMathOperator{\Ric}{Ric}
\DeclareMathOperator{\Lip}{Lip}
\newcommand{\Address}{{
		\bigskip
		\begin{flushright}
			\textsc{Loughborough University, Department of Mathematics\\
				Schofield Building, LE11 3TU, United Kingdom} \\
			\small{\href{mailto:m.de-borbon@lboro.ac.uk}{\nolinkurl{m.de-borbon@lboro.ac.uk}}} \\
			\vspace{3mm}
			\textsc{Aarhus University, Department of Mathematics \\
			Ny Munkegade 118, 8000 Aarhus, Denmark} \\
			\small{\href{mailto:dmitri.panov@kcl.ac.uk}{\nolinkurl{c.spotti@math.au.dk}}}
		\end{flushright}
}}
\title{SNC K\"ahler--Einstein metrics and RCD spaces}
\author{Martin de Borbon and Cristiano Spotti}
\date{}
\begin{document}

\maketitle

\begin{abstract}
    We show that Kähler--Einstein metrics with cone singularities along simple normal crossing (SNC) divisors define RCD spaces, both in the compact setting and in certain non-compact cases, thereby producing many examples of Einstein RCD spaces. In particular, we show the existence of smooth non-compact $4$-manifolds carrying ALE Ricci-flat RCD$(0,4)$ metrics with \textit{any} space form $S^3/\Gamma$ as the link of the tangent cone at infinity, answering a question raised by D. Semola.
    Our proofs rely on the characterization of RCD spaces in the almost-smooth setting due to S. Honda and Honda--Sun.
\end{abstract}

\section{Introduction}

Several recent studies on Ricci limit spaces have highlighted the relevance of the RCD condition in this context (e.g., \cite{AmbrosioGigliSavare2014}). In the K\"ahler setting, this perspective has been explored, for instance, in \cite{Szekelyhidi2024, GuoSong2025, FuGuoSong2025, ChenChiuHallgrenSzekelyhidiToTong2025}. Under the RCD condition, one can extend Cheeger--Colding's limit theory, which has played a key role both in establishing the existence of K\"ahler--Einstein metrics on Fano manifolds \cite{ChenDonaldsonSun2015III} and in the transcendental construction of their K-moduli compactification (see, e.g., \cite{DonaldsonSun2014I, SpottiSunYao2016, LiWangXu2019}).

In this note, we show that a large class of K\"ahler--Einstein (KE) metrics on complex manifolds $X$ with cone singularities of angle $< 2\pi$ along a simple normal crossing (SNC) divisor $D$ --- a somewhat generic condition on the divisor --- indeed give rise to RCD spaces. Our first main result is the following.

\begin{theorem}\label{KEcpt} 
A conical K\"ahler--Einstein metric on a compact simple normal crossing pair $\big(X^n,\sum_i(1-\beta_i)D_i\big)$ with $\beta_i \in (0,1)$ defines an $RCD(\lambda, 2n)$ space.
\end{theorem}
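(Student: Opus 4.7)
The plan is to apply the almost-smooth RCD criterion of Honda and Honda--Sun to the metric measure space $(X, d_g, \mathrm{vol}_g)$, where $g$ is the conical K\"ahler--Einstein metric, smooth away from the divisor $D := \bigcup_i D_i$. I would organise the verification around three ingredients: (a) the Ricci lower bound on the smooth part, (b) smallness of the singular locus, and (c) the local product-of-cones model near the strata of $D$.

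First, on $X^{\mathrm{reg}} := X \setminus D$ the metric $g$ is smooth K\"ahler--Einstein with $\mathrm{Ric}(g) = \lambda g$, so in particular $\mathrm{Ric}(g) \geq \lambda g$ there. The singular locus $D$ is a real codimension-two analytic subset of the compact complex manifold $X$, so its Hausdorff dimension equals $2n-2$ and its $2n$-dimensional Hausdorff measure vanishes; in particular $X^{\mathrm{reg}}$ is open, dense and of full measure. This handles the smooth-part hypotheses of the almost-smooth criterion.

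Second, using the SNC structure, near a depth-$k$ stratum the metric $g$ is uniformly bi-Lipschitz to the product model
\[
g_{\beta_{i_1}} \times \cdots \times g_{\beta_{i_k}} \times g_E,
\]
where each factor $g_{\beta_{i_j}}$ is a two-dimensional cone of angle $2\pi \beta_{i_j} < 2\pi$ and $g_E$ is Euclidean. Each two-dimensional cone with angle $<2\pi$ is an $\mathrm{RCD}(0,2)$ space, and since products of $\mathrm{RCD}(K,N)$ spaces are $\mathrm{RCD}(K,N+N')$, these local models are $\mathrm{RCD}(0,2n)$. Bi-Lipschitz equivalence to the model then yields local volume doubling and a $(1,2)$-Poincar\'e inequality on small balls centred at points of $D$, and it shows that the metric completion of $(X^{\mathrm{reg}}, d_g)$ is the underlying topological space $X$ with its intrinsic length-space distance.

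With these ingredients in place, the Honda--Sun characterisation applies directly and outputs $\mathrm{RCD}(\lambda, 2n)$. The step I expect to be the main obstacle is the behaviour at the deepest crossing strata, where several cone factors meet simultaneously: one must upgrade the bi-Lipschitz equivalence to a sufficiently sharp tangential closeness to the product model and verify the global Sobolev-to-Lipschitz property needed by the criterion. The K\"ahler hypothesis is decisive here, since it constrains the local tangent structure to be exactly a product of two-dimensional cones with Euclidean space rather than an arbitrary codimension-two singularity; combined with the asymptotic analysis of conical KE metrics available on SNC pairs, this should provide the uniform control required to close the argument.
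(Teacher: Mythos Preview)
Your overall strategy---invoking the almost-smooth criterion and checking Ricci lower bound, codimension-two singular set, volume doubling and Poincar\'e via the local product-of-cones model---is on the right track and is essentially the route the paper takes in Section~3 (the paper's Section~2 uses instead Honda's compact criterion, replacing doubling/Poincar\'e by $L^2$-strong compactness and Lipschitz continuity of eigenfunctions; the authors remark that either route works). But your checklist omits the hypothesis that actually carries the weight: the Honda--Sun criterion requires the \emph{quantitative Lipschitz (QL) property for harmonic functions}, namely a uniform estimate $\sup_{\frac{1}{2}B}|\nabla f| \le \tfrac{C}{r}\fint_B |f|$ on small balls. Volume doubling, Poincar\'e, and Sobolev-to-Lipschitz are not enough to conclude RCD; without QL (or, in Honda's compact version, Lipschitz eigenfunctions) the criterion does not apply.

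You have also misplaced the difficulty. Sobolev-to-Lipschitz at the deepest strata is in fact easy: in polar coordinates the model metric $g_\beta$ is uniformly equivalent to the Euclidean metric, so SL reduces chart-by-chart to the Euclidean case and a geodesic-chaining argument globalises it---no ``sharp tangential closeness'' is needed. The genuinely delicate step is the gradient bound for harmonic functions near the crossings, and this \emph{cannot} be read off from bi-Lipschitz equivalence to the model (gradient estimates are not bi-Lipschitz invariant). Here one must use that the conical KE metric is H\"older continuous, i.e.\ admits local $C^{2,\alpha,\beta}$ potentials, and then invoke the Schauder estimates of Guo--Song to control $|\nabla f|$. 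This is where the analytic regularity you gesture at in the last paragraph has to be made precise, and it is the core of the argument rather than a side issue.
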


Concrete examples of such KE spaces for any  values of the scalar curvature are given by metrics singular along SNC hyperplane arrangements in the projective space $\mathbb{CP}^n$. In particular, this gives rise to non-trivial (i.e., not with constant curvature) examples of moduli spaces of Einstein RCD spaces of arbitrarily large dimension. 

Our proof of Theorem \ref{KEcpt} is based on the characterization of RCD spaces for compact quasi-smooth spaces by Honda \cite{honda}. We remark that here we are interested in the ``geometric'' regularity aspects of these singular K\"ahler metrics rather than on their general existence theory (e.g., \cite{guenanciapaun}). 

Having established the RCD property for such KE SNC pairs, one can study the Gromov-Hausdorff limits of the corresponding metrics. In particular, the RCD structure allows one to analyse these limits via tangent cones and rescaled limits.
This provides a natural framework for investigating (partial) compactifications of the corresponding moduli spaces, in analogy with degenerations of smooth K\"ahler-Einstein metrics.

\begin{remarkn}
    When the divisor $D$ is smooth, the K\"ahler-Einstein metrics admit polyhomogeneous expansions \cite{jmr}. In particular, these metrics are examples of stratified spaces and the RCD property has been proved by \cite{BertrandKettererMondelloRichard2021}.
\end{remarkn}

To discuss the non-compact situation, we will use instead the local characterization of RCD spaces by Honda--Sun \cite{hondasun}, in the case of Calabi--Yau ALE spaces conically singular along compact SNC divisors. In particular, in combination with previous results on the existence of such ALE Calabi--Yau \cite{accy}, we obtain the following theorem, answering a question by Daniele Semola \cite{semola}. 

\begin{theorem}\label{ALE4d} For \emph{any} Riemannian spherical space form $S^3/\Gamma$  there are ALE Ricci-flat $RCD(0,4)$ spaces homeomorphic to a smooth manifold, whose metric tangent cone at infinity has such $S^3/\Gamma$ as link. 
\end{theorem}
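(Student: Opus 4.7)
The plan is to combine the existence theory of ALE Calabi--Yau metrics with cone singularities along SNC divisors from \cite{accy} with the local RCD characterization of Honda--Sun \cite{hondasun}, in direct analogy with the argument for Theorem \ref{KEcpt} (where instead the global compact criterion of \cite{honda} was applied).

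First I would invoke \cite{accy} to produce, for the given spherical space form $S^3/\Gamma$, a smooth oriented $4$-manifold $X$ carrying a compact SNC divisor $\sum_i(1-\beta_i) D_i$ together with an ALE Ricci-flat K\"ahler metric with cone singularities of angle $2\pi \beta_i$ along $D_i$ whose tangent cone at infinity is the flat Euclidean cone $\R^4/\Gamma$. The essential new feature relative to Kronheimer's hyperk\"ahler ALE instantons is that allowing SNC cone singularities enlarges the class of admissible asymptotic links from the $SU(2)$ subgroups to \emph{any} finite subgroup of $O(4)$ acting freely on $S^3$, at the cost of introducing a compact divisor of cone singularities in the interior.

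Next I would verify the RCD$(0,4)$ property chart by chart using Honda--Sun's local criterion. On $X \setminus D$ the metric is smooth and Ricci-flat, hence locally RCD$(0,4)$. Near a smooth point of a single component $D_i$ the singular model is the product $\C_{\beta_i} \times \C$, while at an SNC crossing of $D_i$ and $D_j$ the model is $\C_{\beta_i} \times \C_{\beta_j}$; both are RCD$(0,4)$ as tensor products of two-dimensional cones (which are known to be RCD) with flat factors. This local verification is essentially the same as in the proof of Theorem \ref{KEcpt}. Along the ALE end the asymptotic cone $\R^4/\Gamma$ is itself RCD (being a quotient of Euclidean space by a finite isometric group), and the polynomial ALE decay of the conical metric to this flat model gives the uniform control needed to extend the local RCD structure across the non-compact end.

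The main obstacle is the genuine non-compactness: Honda--Sun's hypotheses must be verified uniformly on the entire ALE end, and one must transition smoothly between the singular cone-model charts near the compact divisor and the asymptotically flat charts at infinity. Concretely, this requires producing good cut-off and test functions and obtaining volume and Sobolev-type estimates in the transition region, all of which should follow from the weighted H\"older analysis underpinning the construction of \cite{accy}.
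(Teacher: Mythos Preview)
Your overall strategy---invoke \cite{accy} for the existence of the ALE conical Ricci-flat metric on a resolution of $\C^2/\Gamma$, then apply the Honda--Sun criterion---matches the paper's. However, your description of \emph{how} the Honda--Sun criterion is verified contains a genuine gap.

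You propose to check RCD ``chart by chart'' by observing that the local models $\C_{\beta_i}\times\C$, $\C_{\beta_i}\times\C_{\beta_j}$, and $\R^4/\Gamma$ are themselves RCD$(0,4)$. This is not how the Honda--Sun theorem \cite[Theorem 1.1]{hondasun} works, and more fundamentally it cannot work: the RCD condition is \emph{not} invariant under bi-Lipschitz equivalence, so knowing that $g$ is uniformly equivalent near $D$ to the flat model $\gc$ (which is RCD) tells you nothing directly about whether $(X,d_g,\mu_g)$ is RCD. The Honda--Sun criterion instead asks you to verify, on the almost-smooth space itself, the Sobolev-to-Lipschitz property, semilocal volume doubling, a semilocal Poincar\'e inequality, and---crucially---the \emph{quantitative Lipschitz} (QL) estimate for harmonic functions. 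The last of these is the real analytic content: it requires the H\"older continuity of the conical metric and the Schauder/gradient estimates of Guo--Song for $\gc$-harmonic functions, none of which you mention. Your final paragraph gestures at ``volume and Sobolev-type estimates'' but frames them as technicalities in the transition region, when in fact they (together with QL) \emph{are} the proof.

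Two further points you gloss over: (i) to obtain cone angles $\beta_i\in(0,1)$ from Theorem~\ref{ALE} one must take the \emph{minimal} resolution of $\C^2/\Gamma$; without this the Schauder theory and hence QL fail. (ii) One needs the group-theoretic fact that every finite $\Gamma\subset SO(4)$ acting freely on $S^3$ is conjugate into $U(2)$, so that $\C^2/\Gamma$ is a complex-analytic singularity to which \cite{accy} applies; this is not automatic and is cited in the paper from \cite{zhou2024family}.
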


Such ALE spaces have vanishing ADM-mass and fast decay to the asymptotic cone (see \cite{HeinLeBrun2016}). A famous open problem regarding Ricci-flat ALE spaces asks if, up to orientation reversing, all \textit{smooth} ALE Ricci-flat metrics in dimension four must come from  Kronheimer's hyperk\"ahler examples, or their free quotients \cite{Suvaina2012}. In particular, their flat asymptotic cone at infinity would be topologically constraint.  Theorem \ref{ALE4d} is, of course, not providing any answer to the problem, but it shows that in the larger quasi-smooth RCD category there are counterexamples in which the underlying spaces are still smooth manifolds and the ALE metrics have zero mass.\vspace{.5em}

\noindent \textbf{Organization.} The structure of the paper is as follows. In Section $2$, after recalling the basic definitions, we  discuss the compact case, leading to the proof of main Theorem \ref{KEcpt}. Section $3$ is devoted to the non-compact ALE case, finishing with the proof of main Theorem \ref{ALE4d}.\vspace{.5em}

\noindent \textbf{Acknowledgements.}
We would like to thank Song Sun for answering our questions on his paper with Shouhei Honda \cite{hondasun}. 
We would also like to thank the referee for clarifying the Sobolev to Lipschitz property.
C.S.  thanks the Villum foundation for supporting with the grant Villum YIP+ 00053062.

\section{Compact case}

\subsection{Conical K\"ahler metrics: basic definitions}
Let \(X\) be a compact K\"ahler manifold and let \(D \subset X\) be a simple normal crossing divisor with irreducible decomposition \(D = \sum D_i\). For each irreducible component \(D_i\) of \(D\), let \(\beta_i\) be a real number in the open interval \((0,1)\). 

\begin{definition}\label{def:conicKahler}
    A \textit{conical K\"ahler metric} on the pair \((X, \sum (1-\beta_i) D_i)\) is a smooth K\"ahler metric \(g\) on \(X^{\circ}:= X \setminus D\) satisfying the following condition:
    
    For every point \(p \in D\) there exist complex coordinates \((z_1, \ldots, z_n)\) centred at \(p\) such that \(D = \{z_1 \cdot \ldots \cdot z_k = 0\}\) for some \(1 \leq k \leq n\) with \(D_{i_j} = \{z_j = 0\}\) for \(1 \leq j \leq k\) and 
    \begin{equation}
        C^{-1} \gc \leq g \leq C \gc
    \end{equation}
    for some \(C> 1\), where
    \begin{equation}\label{eq:gcone}
        \gc = \sum_{j=1}^k |z_j|^{2\beta_{i_j}-2} |dz_j|^2 \,+\, \sum_{j=k+1}^n |dz_j|^2 \,.
    \end{equation}
\end{definition}

In the next subsections, we show that certain geometric and analytic  properties as required by Honda's characterization  hold for these metrics. \vspace{.5em}

\noindent \textbf{The local model metric.}
Let \(\beta\) be a positive real number. We denote by \(C(2\pi\beta)\) the \(2\)-cone over a circle of length \(2\pi\beta\) with metric \(dr^2 + \beta^2 r^2 d\theta^2\),
where \((r, \theta)\) are polar coordinates  on \(\R^2\). Letting \(z = ({r}/\beta)^{1/\beta} e^{i\theta}\), the metric  is given by \(|z|^{2\beta-2}|dz|^2\).
We also write \(\C_{\beta}\) for the \(2\)-cone \(C(2\pi\beta)\) when we want to emphasize its complex structure. 

The model cone metric \(\gc\) in Equation \eqref{eq:gcone} is the flat K\"ahler metric on \(\C^n\) obtained as the product 
\[
\C_{\beta_{i_1}} \times \ldots \times \C_{\beta_{i_k}} \times \C^{n-k} \,.
\]
In polar coordinates,
\begin{equation}\label{eq:polar}
    \gc = \sum_{j=1}^k \left( dr_j^2 + \beta_{i_j}^2 r_j^2 d\theta_j^2 \right) \,+\, \sum_{j=k+1}^{n} |dz_j|^2 \,.
\end{equation}
In particular, we note that in polar coordinates the model metric \(\gc\) is uniformly equivalent to the Eucidean metric, with
\[
(\min_j \beta_{i_j})^2 \cdot g_{\mathrm{Euc}} \leq \gc \leq g_{\mathrm{Euc}} \,.
\]

\subsection{Almost smooth metric measure space structure}

Let \(g\) be a conical K\"ahler metric as in Definition \ref{def:conicKahler}. 

\begin{lemma}\label{lem:completion}
    Let \(d_g^{\circ}\) be the metric \(X^{\circ} \times X^{\circ} \to \R_{\geq 0}\), induced by the smooth Riemannian metric \(g\) on \(X^{\circ} = X \setminus D\). Then the following holds:
    \begin{enumerate}[label=\textup{(\roman*)}]
        \item the metric \(d_g^{\circ}\) extends over \(D\) as a continuous function \(d_g: X \times X \to \R_{\geq 0}\)\,;
        \item the function \(d_g\) defines a metric on \(X\);
        \item the metric topology of \((X, d_g)\) agrees with the manifold topology of \(X\);
        \item \((X, d_g)\) is the metric completion of \((X^{\circ}, d_g^{\circ})\) and also a length space.
    \end{enumerate}
\end{lemma}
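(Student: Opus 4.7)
My approach is to reduce the four claims to properties of the model metric $\gc$ via the uniform bi-Lipschitz bound $C^{-1}\gc \le g \le C\gc$, and then to transfer everything to the flat Euclidean metric on the coordinate chart using the comparison $(\min_j \beta_{i_j})^2 g_{\mathrm{Euc}} \le \gc \le g_{\mathrm{Euc}}$ in polar coordinates, under which the desired properties become transparent.

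The crux is a local bi-Lipschitz comparison. For each $p \in D$, fix the chart around $p$ from Definition \ref{def:conicKahler}, pass to the polar chart $\phi$ using $(r_j, \theta_j)$ for $j \le k$ together with $(\mathrm{Re}\,z_j, \mathrm{Im}\,z_j)$ for $j > k$, and shrink to a relatively compact subneighborhood $V \Subset U$. I would prove that there exist $c_1, c_2 > 0$ such that
\[
c_1 \, |\phi(x) - \phi(y)| \;\le\; d_g^{\circ}(x,y) \;\le\; c_2 \, |\phi(x) - \phi(y)|
\qquad \text{for all } x, y \in V \cap X^{\circ}.
\]
The upper bound comes from connecting $x$ and $y$ by the Euclidean segment in $\phi(U)$ (perturbed slightly within $\phi(U)$ to miss the codimension-two strata of $D$ when needed) and estimating its $g$-length using $g \le C g_{\mathrm{Euc}}$. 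The lower bound follows because any curve $\gamma \subset U \cap X^{\circ}$ joining $x$ and $y$ satisfies $L_g(\gamma) \ge C^{-1/2} (\min_j \beta_{i_j}) |\phi(x) - \phi(y)|$ by the pointwise comparisons; curves leaving $U$ have $g$-length bounded below by a positive constant, so shrinking $V$ ensures the infimum is realized by curves staying inside $U$. An analogous, simpler statement holds on smooth charts covering $X \setminus D$.

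Given this local lemma, (i) is immediate because $d_g^{\circ}$ is uniformly continuous on each $V \cap X^{\circ}$ (being bi-Lipschitz to Euclidean distance, which extends continuously to $\overline{V}$), and the local continuous extensions agree on overlaps. For (ii), symmetry and the triangle inequality pass to continuous limits, while positivity for $p \ne q$ follows by choosing $V_p \Subset U_p$ with $q \notin \overline{U_p}$: any curve from $x_n \to p$ to $y_n \to q$ must cross $\partial U_p$, so its $g$-length is bounded below by a fixed positive constant. Claim (iii) holds because the local comparison identifies $d_g$-balls around $p$ with Euclidean polar-coordinate balls, a basis for the manifold topology of $X$. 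For (iv), compactness of $X$ together with (iii) yields completeness of $(X, d_g)$ by extracting manifold-convergent subsequences; since $X^{\circ}$ is manifold-dense and hence $d_g$-dense in $X$, $(X, d_g)$ is the metric completion of $(X^{\circ}, d_g^{\circ})$; and since the completion of a length space is a length space, $(X, d_g)$ is a length space. The principal technical obstacle is the shrinking argument in the local bi-Lipschitz lemma, ensuring that near-minimizing curves cannot escape the coordinate chart.
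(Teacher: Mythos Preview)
Your approach is correct and rests on the same comparisons as the paper's proof: the upper bound $g \le C\gc$ (hence $g \le C g_{\mathrm{Euc}}$ in polar coordinates) for the continuous extension, and the lower bound $g \ge C^{-1}\gc$ for positivity. The difference is purely one of packaging: you consolidate everything into a single local bi-Lipschitz lemma for $d_g^{\circ}$ against the Euclidean polar-coordinate distance, whereas the paper invokes the two inequalities separately at the points where they are needed. Your packaging is a bit more work up front (the shrinking argument to rule out escaping curves) but then makes (i)--(iii) almost formal; the paper instead handles (iii) by a slicker compactness argument (if $U$ is open and $x\in U$, the continuous function $d_g(x,\cdot)$ attains a positive minimum on the compact set $X\setminus U$).

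One small remark on your phrasing for (i): saying ``the local continuous extensions agree on overlaps'' only yields the extension on sets of the form $V\times V$, not on all of $X\times X$. What you actually need (and what your bi-Lipschitz lemma immediately gives) is that every point of $D$ has neighborhoods of arbitrarily small $d_g^{\circ}$-diameter; the triangle inequality $|d_g^{\circ}(x_n,y_n)-d_g^{\circ}(x_m,y_m)|\le d_g^{\circ}(x_n,x_m)+d_g^{\circ}(y_n,y_m)$ then shows $d_g^{\circ}(x_n,y_n)$ is Cauchy whenever $x_n\to x$ and $y_n\to y$ in the manifold topology. This is exactly the criterion the paper states and is implicit in what you wrote.
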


\begin{proof}
    (i) To prove that \(d_g^{\circ}\) extends continuously, 
    it suffices to show that for every point \(x \in D\) and every \(\epsilon >0\), there exists an open set \(U_{x, \epsilon} \subset X\) such that \(x \in U_{x, \epsilon}\) and the diameter of \(U_{x, \epsilon} \cap X^{\circ}\) with respect to \(d^{\circ}_g\) is \(< \epsilon\). The existence of \(U_{x, \epsilon}\) follows from the local inequality \(g \leq C \gc\) and the fact that the cone metric \(\gc\) induces the usual topology of \(\C^n\). 

    (ii) Since \(d_g^{\circ}\) is a metric, it follows by continuity that \(d_g\) is non-negative, symmetric, and satisfies the triangle inequality. It only remains to show that if \(x, y \in X\) are distinct, then \(d_g(x,y)>0\). If \(x,y \in X^{\circ}\) then \(d_g(x,y) = d_g^{\circ}(x,y) > 0\). Thus, we can assume that at least one of the points is not in \(X^{\circ}\), say \(x \in D\). Then there is an open neighbourhood \(U\) of \(x\) such that \(g \geq C^{-1} \gc\) on \(U\). It is then clear that if \(y \in U\) then \(d_g(x,y) > 0\) and there is some \(\epsilon_0>0\) such that \(d_g(x,y) \geq \epsilon_0\) for all \(y \notin U\).  

    (iii) Since \(d_g\) is continuous, open metric balls \(B(x, r)\) with respect to \(d_g\) are open subsets of \(X\). Conversely, if \(U \subset X\) is open and \(x \in U\) then, since \(X \setminus U\) is compact, the function \(y \mapsto d_g(x, y)\) for \(y \in X \setminus U\) attains a minimum \(m\). By (ii), \(m > 0\). It follows that \(B(x, r) \subset U\) for any \(r<m\), showing that \(U\) is open in the metric topology of \((X, d_g)\).

    (iv) Since \(X\) compact, by item (iii) the metric space \((X, d_g)\) is complete. Since \(X^{\circ}\) is dense in \(X\), this shows that \((X, d_g)\) is the metric completion of \((X^{\circ}, d_g^{\circ})\). Finally, we note that the metric completion of a length space is also a length space.
\end{proof}

The volume form of the model metric \(\gc\) is locally integrable near \(D\).
Let \(\mu_g\) denote the obvious extension of the Riemannian measure on \(X^{\circ}\) given by
\[
\mu_g(A) = \int_{A \cap X^{\circ}} dV_g \,.
\]

\begin{lemma}\label{lem:almostsmooth}
    The metric measure space \((X, d_g, \mu_g)\) is almost-smooth.
\end{lemma}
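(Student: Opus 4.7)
The plan is to take the singular set $S:=D$ and the regular part $\Omega:=X^{\circ}$, and to check, item by item, each requirement of Honda's definition of an almost-smooth metric measure space of (real) dimension $2n$. Most conditions follow at once from the previous lemma together with the definition of $\mu_g$; the only point requiring genuine work is the Hausdorff-size bound on the singular set.

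First, $\Omega$ is open and dense in $X$ and, by construction, $(\Omega, g)$ is a smooth Riemannian $2n$-manifold whose induced distance $d_g^{\circ}$ agrees with the restriction of $d_g$ to $\Omega$, while $(X,d_g)$ is a compact length space (previous lemma). By the very definition of $\mu_g$, its restriction to $\Omega$ coincides with the Riemannian volume of $g$. To see that $\mu_g$ is a finite Radon measure on $X$, I would cover $X$ by finitely many charts as in Definition~\ref{def:conicKahler}; on each chart the upper bound $g\le C\gc$ implies $dV_g\le C^n\, dV_{\gc}$, and the explicit polar-coordinate expression \eqref{eq:polar} shows $dV_{\gc}$ is locally integrable for any $\beta_i\in(0,1)$. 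The same estimate yields $\mu_g(D)=0$, since $D$ is locally a proper analytic subset and hence has zero Lebesgue measure.

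The main content is the Hausdorff-size bound $\mathcal{H}^{2n-1}_{d_g}(D)=0$. Working locally, the two-sided comparison $C^{-1}\gc\le g\le C\gc$ makes the identity map bi-Lipschitz between $(U, d_g)$ and $(U, d_{\gc})$ on small enough neighbourhoods; since bi-Lipschitz maps preserve Hausdorff dimension and finiteness of Hausdorff measure, it suffices to verify the bound for the model $(\C^n, d_{\gc})$. There each irreducible component $D_{i_j}=\{z_j=0\}$ is a totally geodesic factor carrying the induced product conical K\"ahler metric
\[
\C_{\beta_{i_1}}\times\cdots\times\widehat{\C_{\beta_{i_j}}}\times\cdots\times\C_{\beta_{i_k}}\times\C^{n-k},
\]
of real dimension $2n-2$, whose Riemannian volume is locally finite (again by the polar-coordinate description) and provides a locally finite $(2n-2)$-dimensional Hausdorff measure. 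Since $D=\bigcup_j D_{i_j}$ is a finite union, this gives $\mathcal{H}^{2n-2}_{d_{\gc}}(D)<\infty$ locally and in particular $\mathcal{H}^{2n-1}_{d_{\gc}}(D)=0$; the bi-Lipschitz comparability transfers the vanishing to $d_g$.

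The anticipated obstacle is purely one of bookkeeping: aligning precisely with the list of axioms in \cite{honda}, including any auxiliary requirement such as vanishing $2$-capacity of $S$ or the length-space property. The former follows from the locally finite $\mathcal{H}^{2n-2}$-measure of $S$ by standard comparison, and the latter is already provided by the previous lemma. No new geometric input beyond the quasi-isometry with $\gc$ and its polar-coordinate description is required.
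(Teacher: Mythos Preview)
Your argument is correct in content but organized around the wrong target. The definition actually invoked in the paper is \cite[Definition 4.13]{hondasun}, and the only nontrivial axiom there is that the singular set has \emph{zero $2$-capacity}; there is no separate Hausdorff-size axiom to verify, so what you label ``the main content'' ($\mathcal{H}^{2n-1}_{d_g}(D)=0$) is not itself a required item. The paper's proof accordingly goes straight for the $2$-capacity condition and dispatches it by citing the explicit cut-off functions constructed in \cite[p.~261]{mondello}. Your route is different but still arrives: you establish that $D$ has locally finite $\mathcal{H}^{2n-2}$-measure via the bi-Lipschitz comparison with $\gc$, and then (in your final paragraph) invoke the standard fact that a set of locally finite $\mathcal{H}^{N-2}$-measure in an $N$-dimensional space has zero $2$-capacity. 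That implication is indeed classical and bi-Lipschitz stable, so the argument closes. In short, your Hausdorff computation is doing real work, but only as an intermediate step toward the $2$-capacity conclusion you treat as an afterthought; the paper skips the Hausdorff detour entirely by quoting an explicit cut-off construction.
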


\begin{proof}
    According to \cite[Definition 4.13]{hondasun} we need to check that the singular set of \(g\) has zero \(2\)-capacity, i.e., that there is a sequence of cut-off functions \(\varphi_k\) with \(\varphi_k|_D = 1\) for all \(k\) and \(\varphi_k = 0\) on each fixed compact subset of \(X^{\circ}\) for \(k \gg 1\) such that \(\lim_{k \to \infty} \int_X |\nabla \varphi_k|^2 = 0\). The explicit construction of these cut-off functions is provided in \cite[p. 261]{mondello}.
\end{proof}

\subsection{Sobolev to Lipschitz and \(L^2\)-strong compactness}

Next, we show that the metric measure space \((X, d_g, \mu_g)\)
satisfies the Sobolev to Lipschitz \cite[Definition 2.2]{honda} and \(L^2\)-strong compactness \cite[Definition 3.5]{honda} conditions. 


\begin{lemma}\label{lem:SL}
    The metric measure space \((X, d_g, \mu_g)\) satisfies the Sobolev to Lipschitz property.
\end{lemma}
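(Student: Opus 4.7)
The plan is to reduce the statement to the smooth open part $X^\circ$, where Sobolev-to-Lipschitz is classical. Let $f$ lie in the domain of the Cheeger energy of $(X, d_g, \mu_g)$ with minimal weak upper gradient bounded by $L$ $\mu_g$-almost everywhere; the goal is to build an $L$-Lipschitz representative with respect to $d_g$. The first step is to identify, on $X^\circ$, the abstract Cheeger gradient of $f$ with the classical Riemannian gradient $|\nabla f|_g$ of $g$. The inequality $|\nabla f|_g \geq |\nabla f|$ is automatic from the smoothness of $g$ on $X^\circ$, since smooth vector calculus shows that the Riemannian gradient is a weak upper gradient there. For the reverse inequality, I would use the cutoff sequence $\{\varphi_k\}$ furnished by the zero $2$-capacity of $D$ in the proof of Lemma \ref{lem:almostsmooth}: the functions $(1-\varphi_k) f$ are Sobolev approximations of $f$ supported away from $D$ on which the Cheeger gradient tautologically coincides with the classical Riemannian gradient, and passing to the limit yields $|\nabla f| = |\nabla f|_g$ a.e.\ on $X^\circ$.

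Given this identification, $f|_{X^\circ}$ lies in $W^{1,2}_{\mathrm{loc}}(X^\circ, g)$ with $|\nabla f|_g \leq L$ a.e. Since $(X^\circ, g)$ is a smooth connected Riemannian manifold, standard calculus (mollification in local charts plus integration along geodesics) yields a representative $\tilde{f}$ of $f|_{X^\circ}$ that is locally $L$-Lipschitz with respect to $d_g^\circ$. Because $d_g^\circ$ is a length metric, local $L$-Lipschitz continuity upgrades to global $L$-Lipschitz continuity on $(X^\circ, d_g^\circ)$ via the standard argument of dividing a near-minimising curve into finitely many short pieces. Since $d_g$ restricted to $X^\circ \times X^\circ$ coincides with $d_g^\circ$ (being its continuous extension), $\tilde{f}$ is also $L$-Lipschitz on $(X^\circ, d_g)$. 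By uniform continuity and density of $X^\circ$ in the compact space $(X, d_g)$, $\tilde{f}$ extends uniquely to an $L$-Lipschitz function $\hat{f} : X \to \mathbb{R}$. Since $D$ has zero $\mu_g$-measure, $\hat{f} = f$ $\mu_g$-almost everywhere, so $\hat{f}$ is the desired representative.

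The main obstacle is the first step: identifying the abstract Cheeger gradient on $(X, d_g, \mu_g)$ with the classical Riemannian gradient on the smooth locus. The remainder is either smooth Riemannian calculus or a routine extension-by-density argument. The vanishing $2$-capacity of $D$ from Lemma \ref{lem:almostsmooth} is exactly what powers this identification, since it guarantees that Sobolev test functions can be chosen supported away from the singular set without any loss of energy, preventing any portion of $|\nabla f|$ from being concentrated on $D$.
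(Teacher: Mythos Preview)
Your argument is correct but follows a genuinely different route from the paper. The paper does not restrict to $X^\circ$ and then extend; instead it exploits the observation that the model metric $g_\beta$ is uniformly bi-Lipschitz to the Euclidean metric \emph{in polar coordinates} (Equation~\eqref{eq:polar}). One covers $X$ by charts $U$ on which $g$ is bi-Lipschitz to a Euclidean domain, applies the classical Euclidean Sobolev-to-Lipschitz on each chart (including across $D$) to obtain local $1$-Lipschitz representatives $f_U$, checks these agree on overlaps by continuity, and then upgrades to a global $1$-Lipschitz bound by chaining along a minimising geodesic --- the same endgame as yours.

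The tradeoff: the paper's route is more elementary and never needs to identify the abstract Cheeger gradient with the Riemannian one on $X^\circ$, since the bi-Lipschitz equivalence transports both the Sobolev class and the Lipschitz property chart by chart. Your route is more conceptual and would apply to any almost-smooth space whose singular set has measure zero, without using anything specific about the cone model; but it leans on the identification $|\nabla f| = |\nabla f|_g$ on $X^\circ$. Your cutoff sketch for this step is workable (on each compact $K\subset X^\circ$ one has $(1-\varphi_k)f = f$ for $k$ large, so classical and abstract gradients must match there), though the cleaner justification is simply locality of the minimal relaxed gradient plus the standard coincidence of Cheeger and classical Sobolev on smooth Riemannian pieces. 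Note, incidentally, that your argument does not really need the zero $2$-capacity of $D$: it only uses that $D$ has $\mu_g$-measure zero (so $\hat f = f$ a.e.) and that $X^\circ$ is dense in $X$.
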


\begin{proof}
    According to \cite[Definition 2.2]{honda} we need to show that if 
    \[
    f \in H^{1,2}(X, d_g, \mu_g)
    \] 
    satisfies \(|\nabla f| \leq 1\) \(\mu_g\)-a.e. in \(X\) then \(f\) has a \(1\)-Lipschitz representative. By Lemma \ref{lem:completion} we know that the distance function on \(X\) is the continuous extension of the Riemannian distance \((X^{\circ}, d^{\circ}_g)\). The result then follows arguing as in \cite[Proposition 9]{Szekelyhidi2024}.
\end{proof}

\begin{lemma}\label{lem:strongcompact}
    The metric measure space \((X, d_g, \mu_g)\) satisfies the \(L^2\)-strong compactness condition.
\end{lemma}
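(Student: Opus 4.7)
The plan is to reduce the $L^2$-strong compactness on $(X,d_g,\mu_g)$ to the classical Rellich--Kondrachov theorem on Euclidean charts, by a finite open cover and partition of unity argument paralleling the proof of Lemma \ref{lem:SL}.

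First, I would take a sequence $(f_k)_{k\ge 1}$ in $H^{1,2}(X,d_g,\mu_g)$ with uniform bound $\|f_k\|_{L^2}+\||\nabla f_k|\|_{L^2}\le M$ and aim to extract an $L^2$-convergent subsequence. Using Definition \ref{def:conicKahler}, I cover the compact set $X$ by finitely many coordinate charts $U_1,\dots,U_N$ on each of which $g$ is uniformly equivalent either to the Euclidean metric (when $U_\alpha\cap D=\emptyset$) or to the model cone metric $\gc$ (when $U_\alpha$ meets $D$). I then fix a subordinate partition of unity $\{\chi_\alpha\}$ consisting of $d_g$-Lipschitz functions. In the charts meeting $D$, I pass to the polar coordinates of Equation \eqref{eq:polar}, in which $\gc$ is uniformly equivalent to the Euclidean product metric on a bounded domain inside $[0,\infty)^k\times (S^1)^k\times\R^{2(n-k)}$.

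Second, for each $\alpha$ the truncated sequence $(\chi_\alpha f_k)$ is bounded in the Sobolev space $H^{1,2}$ associated to the local reference (Euclidean or cone) metric. Away from $D$ this is immediate. In a cone chart, bi-Lipschitz equivalence with the Euclidean product metric in polar coordinates, together with the fact that the singular locus $\{\prod_j r_j=0\}$ has zero $2$-capacity by Lemma \ref{lem:almostsmooth}, identifies the conical $H^{1,2}$ on the chart with the ordinary Euclidean $H^{1,2}$ on the bounded polar domain. Applying the classical Rellich--Kondrachov theorem chart by chart and extracting diagonally across $\alpha=1,\dots,N$, I obtain a single subsequence $(f_{k_j})$ such that $(\chi_\alpha f_{k_j})$ converges in $L^2$ for every $\alpha$; summing produces $L^2$-convergence of $(f_{k_j})$ on $(X,\mu_g)$.

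The main technical delicacy is in the cone charts: I must confirm that a $d_g$-Lipschitz cut-off remains an admissible multiplier on $H^{1,2}(X,d_g,\mu_g)$ and that the polar-coordinate change genuinely identifies the weak Sobolev spaces on either side. Both points reduce to the vanishing $2$-capacity of $D$ provided by the cut-offs of \cite[p.~261]{mondello}, combined with the bi-Lipschitz character of the polar change of coordinates on the smooth locus; once these are in place, the extraction is a standard Euclidean Rellich--Kondrachov argument with no further complications.
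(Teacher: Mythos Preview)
Your proposal is correct and follows essentially the same route as the paper: a finite cover by charts on which $g$ is bi-Lipschitz to the Euclidean or to $\gc$, the polar-coordinate observation reducing the cone charts to the Euclidean case, and a diagonal extraction. The only difference is cosmetic: the paper skips the partition of unity and simply restricts $f_j$ to each $U_i$, applies local Rellich--Kondrachov to get $f_{j_k}\to f$ in $L^2(U_i)$ for every $i$, and concludes via $\|f-f_{j_k}\|_{L^2(X)}\le\sum_i\|f-f_{j_k}\|_{L^2(U_i)}$; this avoids the multiplier and capacity checks you flag, at the cost of implicitly assuming the $U_i$ have boundaries nice enough for the compact embedding (which one can arrange since the charts may be taken to be coordinate balls).
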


\begin{proof}
    According to \cite[Definition 3.5]{honda} we have to show that the inclusion of \(H^{1,2}(X, d_g, \mu_g)\) in \(L^2(X, \mu_g)\) is compact. Let \((f_j)_{j=1}^{\infty}\) be a sequence of functions in \(H^{1,2}(X, d_g, \mu_g)\) with \(\|f_j\|_{H^{1,2}(X)} \leq 1\). Take a finite cover of \(X\) by open sets \(U_1, \ldots, U_N\) such that on each \(U_i\) the metric \(g\) is uniformly equivalent to either the Euclidean metric or the model cone metric \(\gc\). Since \(\gc\) is uniformly equivalent to the Euclidean metric in polar coordinates, the \(L^2\)-strong compactness property holds on the open sets \(U_i\) of the cover. Since the cover is finite, by taking a diagonal subsequence \((f_{j_k})_{k=1}^{\infty}\) and noting that sub-sequential limits agree on intersections,
    we can assume that there is a function \(f \in L^2(X, \mu_g)\) such that \(f_{j_k} \to f\) as \(k \to \infty\) in \(L^2(U_i, \mu_g)\) for all \(1 \leq i \leq N\). Therefore,
    \[
    \|f - f_{j_k}\|_{L^2(X)} \leq \sum_{i=1}^N \|f - f_{j_k}\|_{L^2(U_i)} \to 0
    \]
    as \(k \to \infty\), the subsequence \((f_{j_k})_{k=1}^{\infty}\) converges in \(L^2(X, \mu_g)\), and this finishes the proof of the lemma.
\end{proof}

\subsection{Schauder estimates}

We recall the Schauder estimates from \cite{guosong}. \vspace{.5em}

\noindent \textbf{Local setup.}
Consider \(\C^n\) equipped with the model cone metric
 \begin{equation*}
        \gc = \sum_{i=1}^k |z_i|^{2\beta_{i}-2} |dz_i|^2 \,+\, \sum_{a=k+1}^n |dz_a|^2 \,,
\end{equation*}
where \(\beta_1, \ldots, \beta_k \in (0,1)\).
We denote indices that run over \(1, \ldots, k\) with letters \(i, j\) while we use letters \(a, b\) to denote indices that run over \(k+1, \ldots, n\). 

We denote by \(C^{\alpha, \beta}\) the space of H\"older continuous functions \(f\) that are \(C^{\alpha}\) in polar coordinates \eqref{eq:polar}.
Consider the frame of \((1,0)\)-forms given by
\begin{equation}
    \begin{cases}
        \varepsilon_i = |z_i|^{\beta_i-1} \cdot dz_i &\text{ for }\,\, 1 \leq i \leq k  \,,\\
        \varepsilon_a = dz_a &\text{ for }\,\, k+1 \leq a \leq n \,.
    \end{cases} 
\end{equation}
The reason for considering such frame is that its elements have constant length and are pairwise orthogonal with respect to the model metric \(\gc\).    
Let \(\alpha \in (0,1)\), furthermore we assume that 
\begin{equation}\label{eq:alpharestriction}
    \alpha < \min_i \left(\frac{1}{\beta_i} - 1\right) \,.
\end{equation}

A \((1,0)\)-form \(\eta\) is \(C^{\alpha, \beta}\) if we can write
\[
\eta = \sum_{i=1}^k \eta_i \cdot \varepsilon_i \,+\, \sum_{a=k+1}^n \eta_a \cdot \varepsilon_a \,,
\]
where the coefficients \(\eta_{i}\) and \(\eta_a\) are \(C^{\alpha, \beta}\) functions. Moreover, it is also required that the functions \(\eta_i\) vanish along \(\{z_i = 0\}\) for \(1 \leq i \leq k\).
Similarly, a \((1,1)\)-form \(\eta\) is \(C^{\alpha, \beta}\) if the components of \(\eta\) with respect to the frame composed by the \((1,1)\)-forms \(\varepsilon_i \wedge \bar{\varepsilon}_j\), \(\varepsilon_i \wedge \bar{\varepsilon}_a\), \(\varepsilon_a \wedge \bar{\varepsilon}_i\), and \(\varepsilon_a \wedge\bar{\varepsilon}_b\) are \(C^{\alpha, \beta}\) functions. It is also required that the \(\varepsilon_i \wedge \bar{\varepsilon}_a\) and \(\varepsilon_a \wedge \bar{\varepsilon}_i\) components vanish along \(\{z_i = 0\}\).
A function \(u\) is \(C^{2, \alpha, \beta}\) if \(u\), \(\p u\), and \(\ddb u\) are  \(C^{\alpha, \beta}\).
\vspace{.5em}

\noindent \textbf{Global setup.}
Let \(\omega\) be a conical K\"ahler metric on a compact K\"ahler manifold as in Definition \ref{def:conicKahler}. The metric \(\omega\) is \emph{H\"older continuous} if for every point \(p \in D\) we can find local coordinates where \(D = \{z_1 \cdot \ldots \cdot z_k = 0\}\) such that
\[
\omega = \omega_{\mathrm{cone}} + \eta
\]
where \(\eta\) is a \(C^{\alpha, \beta}\) \((1,1)\)-form that vanishes at \(p\). Equivalently, in a neighbourhood of \(p\) we can write \(\omega = i \ddb \varphi\) for some \(\varphi \in C^{2, \alpha, \beta}\).
The function space \(C^{2, \alpha, \beta}(X)\) is defined in the obvious way by taking an open cover. 

The main result we want to recall is the following.

\begin{theorem}\label{thm:schauder}
    Let \(g\) be a conical K\"ahler metric on the pair \((X, \sum(1-\beta_i)D_i)\). Moreover, assume that \(g\) is \(C^{\alpha, \beta}\) for some \(\alpha\) as in Equation \eqref{eq:alpharestriction}.
    If \(u \in H^{1,2}\) is a weak solution of \(\Delta_g u = f\) with \(f \in C^{\alpha, \beta}\). Then \(u \in C^{2, \alpha, \beta}\).
\end{theorem}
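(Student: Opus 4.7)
The plan is to localize the problem using a finite open cover of $X$ and reduce to the local Schauder theory developed by Guo and Song on the model cone. Globally, I would cover $X$ by finitely many charts: interior charts lying in $X^{\circ}$, where classical interior elliptic regularity applies directly to $\Delta_g u = f$, and boundary charts around points $p \in D$ in which the conical metric admits the normal form $\omega = \omega_{\mathrm{cone}} + \eta$ with $\eta \in C^{\alpha,\beta}$ a $(1,1)$-form vanishing at $p$. A partition of unity argument would then patch local $C^{2,\alpha,\beta}$ bounds into a global one.

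In each chart near $D$, I would rewrite the equation in perturbative form
\[
\Delta_{\gc} u \;=\; f + (\Delta_{\gc} - \Delta_g) u \;=:\; \tilde{f}.
\]
The operator $\Delta_{\gc} - \Delta_g$ is second-order with $C^{\alpha,\beta}$ coefficients that vanish at $p$, so by shrinking the chart its $C^{\alpha,\beta}$ norm can be made arbitrarily small. Before running any Schauder iteration I would secure the initial regularity needed: starting from $u \in H^{1,2}$ with bounded right-hand side $f$, Moser--De Giorgi iteration, using that the model cone $\gc$ is equivalent in polar coordinates to the Euclidean metric and hence supports the standard volume doubling and Poincar\'e inequality, yields local $L^\infty$ and $C^{0,\alpha}$ bounds for $u$.

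With this at hand I would invoke the main local Schauder estimate of \cite{guosong}: for $v$ on a model-cone ball solving $\Delta_{\gc} v = h$ with $h \in C^{\alpha,\beta}$,
\[
\|v\|_{C^{2,\alpha,\beta}(B_{1/2})} \;\leq\; C\bigl(\|v\|_{C^{0}(B_1)} + \|h\|_{C^{\alpha,\beta}(B_1)}\bigr).
\]
Applied to $u$ with $h = \tilde f$, the smallness of the coefficients of $\Delta_{\gc} - \Delta_g$ on a shrunken chart permits absorbing the perturbation term into the left-hand side by a standard iteration on concentric balls, upgrading the preliminary H\"older control of $u$ to the desired $C^{2,\alpha,\beta}$ bound.

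The main obstacle I expect lies in honestly applying the local Schauder estimate in the SNC setting where several cone factors with distinct angles appear simultaneously. The anisotropic space $C^{\alpha,\beta}$ with the restriction $\alpha < \min_i(1/\beta_i - 1)$ is designed precisely to balance the scaling between radial and angular directions; one must check that the vanishing conditions on the mixed frame components $\varepsilon_i \wedge \bar{\varepsilon}_a$ along $\{z_i = 0\}$ are preserved by the perturbation operator, so that $\tilde f$ genuinely lies in the correct $C^{\alpha,\beta}$ class. This bookkeeping, together with establishing the local Schauder estimate on the pure model cone with crossing factors, is the technical heart of \cite{guosong}, and is what allows the reduction above to close without loss of regularity at the divisor intersections.
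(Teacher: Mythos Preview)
Your sketch is a reasonable outline of how the Schauder estimate is actually obtained, but you should be aware that the paper does not prove this theorem at all: its entire proof is the single sentence ``This is part of \cite[Corollary 3.41]{guosong}.'' The result is quoted from the literature, not re-derived.

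What you have written is, in effect, a summary of the strategy behind the Guo--Song reference rather than an independent argument. The ingredients you list --- De Giorgi--Nash--Moser for initial H\"older regularity (using uniform equivalence of $\gc$ to the Euclidean metric in polar coordinates), the model-cone Schauder estimate, and a freezing-coefficients perturbation to pass from $\Delta_{\gc}$ to $\Delta_g$ --- are indeed the components of that proof, and your identification of the SNC bookkeeping (multiple angles, vanishing conditions on mixed frame components) as the technical core is accurate. One small caution: in the absorption step you need not just smallness of the coefficients of $\Delta_{\gc}-\Delta_g$ in $C^{\alpha,\beta}$, but also that this operator maps $C^{2,\alpha,\beta}$ back into $C^{\alpha,\beta}$ with controlled norm; this is exactly where the vanishing conditions on the mixed components matter and is handled in \cite{guosong}. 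So your proposal is correct in spirit but is really a gloss on the cited source, whereas the paper is content to invoke that source directly.
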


\begin{proof}
    This is part of \cite[Corollary 3.41]{guosong}.
\end{proof}

\begin{corollary}\label{cor:schauder}
    Let \(g\) be a conical K\"ahler metric on the pair \((X, \sum(1-\beta_i)D_i)\). Moreover, assume that \(g\) is H\"older continuous. If \(u \in H^{1,2}\) is a weak solution of \(\Delta_g u = \lambda u\) for some \(\lambda \in \R\), then \(u\) is Lipschitz.
\end{corollary}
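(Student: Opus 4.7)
The strategy is a two-step bootstrap. Step~1: upgrade the weak solution $u \in H^{1,2}$ to the H\"older class $C^{\alpha', \beta}$ of the Schauder setup via a De Giorgi--Nash--Moser argument. Step~2: apply Theorem~\ref{thm:schauder} to conclude $u \in C^{2, \alpha', \beta}$, which in particular yields the desired Lipschitz regularity with respect to $d_g$.

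For Step~1, I would cover $X$ by finitely many charts as in Definition~\ref{def:conicKahler}, in which $g$ is uniformly equivalent to the model metric $\gc$. A direct computation shows that, in the polar coordinates $(r_j, \theta_j)$ on each factor $\C_{\beta_{i_j}}$, the $\gc$-Dirichlet energy is uniformly comparable to the Euclidean Dirichlet energy on $\C^n$: the $\partial_{r_j}$- and angular-derivative contributions scale in a balanced way once paired with the $\gc$-volume form $\prod_j \beta_{i_j} r_j \, dr_j\, d\theta_j \cdot \prod_a dV_{\C}$. Consequently, the weak equation $\Delta_g u = \lambda u$ translates into the weak form of a uniformly elliptic divergence-form equation $L u = \lambda \rho u$ on an open subset of $\C^n \simeq \R^{2n}$, with bounded measurable coefficients. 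Since $D$ has zero $2$-capacity (Lemma~\ref{lem:almostsmooth}), the $H^{1,2}(X,d_g,\mu_g)$-weak equation coincides with the weak equation on $X^{\circ}$ pulled back to the Euclidean chart, so no boundary issues arise at $D$. Classical De Giorgi--Nash--Moser then gives $u \in L^{\infty}$ and $u \in C^{\alpha'}_{\mathrm{loc}}$ in Cartesian coordinates for some $\alpha' \in (0,1)$. The Cartesian H\"older norm dominates the polar one, and after shrinking $\alpha'$ so that $\alpha' < \min_i(1/\beta_i - 1)$ and $\alpha'$ does not exceed the H\"older exponent of $g$, we obtain $u \in C^{\alpha', \beta}$.

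For Step~2, the right-hand side $\lambda u$ now lies in $C^{\alpha', \beta}$, so Theorem~\ref{thm:schauder} applies and yields $u \in C^{2, \alpha', \beta}$. In particular, $\p u$ has bounded coefficients in the $\gc$-orthonormal frame $\{\varepsilon_i, \varepsilon_a\}$, so $|\nabla u|_g$ is uniformly bounded. Since $(X, d_g)$ is a length space, a bounded gradient implies the Lipschitz estimate $|u(x) - u(y)| \le \|\nabla u\|_{\infty}\, d_g(x,y)$.

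\textbf{Main obstacle.} The content of the argument is concentrated in Step~1: verifying that classical elliptic regularity can be applied across the singular locus. The two enabling observations are the uniform equivalence of the conical and Euclidean Dirichlet forms in polar coordinates, and the vanishing $2$-capacity of $D$, which together reduce the equation to a standard uniformly elliptic problem on a Euclidean domain. Once that is in place, the remaining Schauder bootstrap and conversion from bounded gradient to Lipschitz are routine.
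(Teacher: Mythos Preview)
Your proposal is correct and follows exactly the paper's approach: invoke De Giorgi--Nash--Moser to get $u \in C^{\alpha',\beta}$, then apply Theorem~\ref{thm:schauder} to obtain $u \in C^{2,\alpha',\beta}$, hence bounded gradient, hence Lipschitz. Your write-up simply fleshes out the details (the uniform ellipticity in polar coordinates and the passage from bounded gradient to Lipschitz via the length-space structure) that the paper leaves implicit.
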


\begin{proof}
    By De Giorgi-Nash-Moser elliptic regularity, the function \(u\) is H\"older continuous. By Theorem \ref{thm:schauder}, \(u\) is \(C^{2, \alpha, \beta}\). In particular, the gradient of \(u\) is uniformly bounded, so \(u\) is Lipschitz.
\end{proof}

\begin{remarkn}
    The assumption that the cone angles are \(< 2\pi\) is crucial for Corollary \ref{cor:schauder} to hold. Indeed, if \(\beta> 1\) then the function \(u = r^{1/\beta} \cos(\theta)\) is harmonic on the \(2\)-cone \(C(2\pi\beta)\) but it is not Lipschitz.
\end{remarkn}

\subsection{Proof of Theorem \ref{KEcpt}}

We begin by establishing the following general result.

\begin{theorem}\label{thm:Ricci}
    Let \(g\) be a conical K\"ahler metric on the pair \((X, \sum (1-\beta_i) D_i)\). Assume that \(g\) is H\"older continuous and that \(\Ric(g) \geq \lambda g\) on \(X^{\circ}\). Then \((X, d_g, \mu_g)\) is an \(RCD(\lambda, 2n)\) space.
\end{theorem}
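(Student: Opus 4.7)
The plan is to invoke Honda's characterization of compact almost-smooth $RCD(K,N)$ spaces from \cite{honda}. Most of the work has already been carried out in the preceding subsections, so the proof of Theorem \ref{thm:Ricci} becomes essentially an assembly of the previous lemmas, together with an observation that the Ricci lower bound and the correct dimension are in place.

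More concretely, I would first lay out the hypotheses of Honda's characterization: (a) $(X, d_g, \mu_g)$ is a compact almost-smooth metric measure space; (b) it satisfies the Sobolev-to-Lipschitz property; (c) it satisfies the $L^2$-strong compactness condition; (d) the smooth part $X^\circ$ is open and of full $\mu_g$-measure, carries a Riemannian metric of real dimension $2n$ with $\Ric \geq \lambda$ in the usual sense; and (e) eigenfunctions of the Laplacian on $(X, d_g, \mu_g)$ have sufficient regularity (in particular, they are Lipschitz), so that the Bochner inequality can be propagated from the smooth part.

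Each of these ingredients is now available: (a) is Lemma \ref{lem:almostsmooth}, (b) is Lemma \ref{lem:SL}, (c) is Lemma \ref{lem:strongcompact}, (d) is the assumption $\Ric(g)\geq \lambda g$ on $X^\circ$ combined with the fact that $\dim_{\R} X = 2n$, and (e) is exactly Corollary \ref{cor:schauder}, which in turn relies on the Guo--Song Schauder estimates Theorem \ref{thm:schauder}. Feeding these into Honda's theorem immediately yields that $(X, d_g, \mu_g)$ is an $RCD(\lambda, 2n)$ space, as required.

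The main obstacle I anticipate is not a substantial analytical one, but rather a careful bookkeeping issue: one has to verify that the precise formulation of ``almost-smooth Ricci lower bound + eigenfunction regularity'' adopted in \cite{honda} matches what has been established here. In particular, one should check that the conventions on the smooth part (which for us is $X^\circ = X\setminus D$) and on the Laplacian (the one associated to the Cheeger energy on $(X,d_g,\mu_g)$, which coincides on $X^\circ$ with the Riemannian Laplacian of $g$ by the almost-smoothness in Lemma \ref{lem:almostsmooth}) are compatible, and that Corollary \ref{cor:schauder} delivers precisely the regularity used in Honda's Bochner argument. Once these identifications are spelled out, the conclusion follows by direct citation.
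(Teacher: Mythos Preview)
Your proposal is correct and matches the paper's proof essentially line for line: the paper verifies almost-smoothness (Lemma \ref{lem:almostsmooth}), Sobolev-to-Lipschitz (Lemma \ref{lem:SL}), $L^2$-strong compactness (Lemma \ref{lem:strongcompact}), Lipschitz regularity of eigenfunctions (Corollary \ref{cor:schauder}), and the Ricci lower bound on $X^{\circ}$, then cites \cite[Corollary 3.10]{honda}. Your anticipated bookkeeping concern is legitimate but does not materialize into an obstacle.
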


\begin{proof}
    By Lemma \ref{lem:almostsmooth}, the triple \((X, d_g, \mu_g)\) is a \((2n)\)-dimensional almost smooth compact metric measure space. Moreover, we have:
    \begin{enumerate}[label=\textup{(\arabic*)}]
        \item by Lemma \ref{lem:SL} the Sobolev to Lipschitz property holds;
        \item by Lemma \ref{lem:strongcompact} the \(L^2\)-strong compactness condition holds;
        \item by Corollary \ref{cor:schauder} the eigenfunctions of the Laplacian are Lipschitz;
        \item by assumption, \(\Ric(g) \geq \lambda g\) on \(X^{\circ}\).
    \end{enumerate}
    Thus, we can apply \cite[Corollary 3.10]{honda} to conclude that \((X, d_g, \mu_g)\) is an \(RCD( \lambda, 2n)\) space.
\end{proof}

We specify to the K\"ahler-Einstein situation.

\begin{definition}\label{def:conicKE}
    Let \(g\) be a conical K\"ahler metric on the pair \((X, \sum (1-\beta_i) D_i)\). We say that \(g\) is a \emph{conical K\"ahler-Einstein metric} if the following conditions are satisfied:
    \begin{enumerate}[label=\textup{(\roman*)}]
        \item \(g\) is H\"older continuous;
        \item \(\Ric (g) = \lambda g\) on \(X^{\circ}\) for some \(\lambda \in \R\).
    \end{enumerate}
\end{definition}

\begin{remark}
    It seems to us that item (i) in the above definition is indeed redundant  and implied by (ii), as it follows by an adaptation of the technique in \cite{chenwang} for a smooth divisor to the simple normal crossing setting using the Schauder estimates from \cite{guosong}.
\end{remark}

\begin{proof}[Proof of Theorem \ref{KEcpt}]
    According to Definition \ref{def:conicKE}, if \(g\) is a conical K\"ahler-Einstein metric on the pair \((X, \sum (1-\beta_i) D_i)\), then \(g\) satisfies the conditions of Theorem \ref{thm:Ricci}. Therefore \((X, d_g, \mu_g)\) is an \(RCD(\lambda, 2n)\) space.
\end{proof}

\begin{remark} Similar arguments as the ones used in this section should also give that other KE metrics singular along more general divisors are RCD spaces: for instance, the Ricci-flat metrics asymptotic to PK cones on more general (non-SNC) hyperplane arrangements in $\mathbb{CP}^2$ constructed in \cite{deBorbonSpotti2023}.

\end{remark}

\section{Non-compact case}

In this section, we show that certain (Ricci-flat) non-compact spaces with conical singularities along a \textit{compact} SNC divisor are also RCD. We focus our discussion on the ALE case, also because our main goal is proving Theorem \ref{ALE4d}, but the arguments apply verbatim as well in the more general asymptotically conical case (for instance, for the Ricci flat Calabi-Yau spaces of this type constructed in \cite{accy}). 

Compared to the previous section, the main tool we use in our proof is the local characterization of  the RCD condition for (possibly non-compact) quasi-smooth spaces in \cite{hondasun}.

\subsection{ALE conical K\"ahler metrics}

We consider the following class of singular metrics on non-compact spaces.

\begin{definition}
    Let \(X\) be a non-compact complex manifold and let \(D \subset X\) be a compact simple normal crossing divisor with irreducible decomposition \(D = \sum D_i\). For each irreducible component \(D_i\) of \(D\), let \(\beta_i\) be a real number in the open interval \((0,1)\). An ALE conical K\"ahler metric on the pair
    \((X, \sum (1-\beta_i) D_i)\) is a smooth K\"ahler metric \(g\) on \(X^{\circ}= X \setminus D\) satisfying the following properties.

    \begin{enumerate}[label=\textup{(\roman*)}]
    \item For every point \(p \in D\) there exist complex coordinates \((z_1, \ldots, z_n)\) centred at \(p\) such that \(D = \{z_1 \cdot \ldots \cdot z_k = 0\}\) and 
    \begin{equation}
        C^{-1} \gc \leq g \leq C \gc
    \end{equation}
    for some \(C> 1\), where \(\gc\) is the model metric given by Equation \eqref{eq:gcone}.

    \item There exists a compact set $K\subset X$ and a diffeomorphism onto its image $\phi:X\setminus K \longrightarrow \mathbb{C}^{n}/\Gamma$, with $\Gamma$ finite group in $U(n)$ acting freely away from zero such that
$$ \Vert \nabla_0^k \big( (\phi^{-1})^* g-g_0 \big)\Vert_{g_0}=O(\vert z \vert^{-\lambda-k}) \quad \textup{ for all } k \geq 0,$$
where $g_0$ denotes the flat metric, $\lambda >0$ and $\vert z \vert \gg 1$.
\end{enumerate}    
The metric \(g\) is H\"older continuous if locally near \(D\) it admits \(C^{2,\alpha, \beta}\) K\"ahler potentials, as explained in the previously discussed compact case.
\end{definition}

Let \(g\) be an ALE conical K\"ahler metric on the pair \((X, \sum (1-\beta_i) D_i)\). 
In full analogy to the compact case discussed previously, the distance function defined by the Riemannian metric \(g\) on \(X^{\circ}\) extends continuously over \(D\) and the metric measure space $(X, d_g, \mu_g)$ is an almost smooth complete length space. 
We prove some basic results about $(X, d_g, \mu_g)$ that we need in order to apply Honda-Sun's characterization results of RCD spaces in the quasi-smooth case.

\subsection{Volume doubling and Poincar\'e inequality}

\begin{proposition}\label{prop:voldoub} $(X, d_g, \mu_g)$ as above  satisfies (global) volume doubling, i.e., there exists $C>0$, such that $\mu_g(B(x,2r))\leq C\, \mu_g(B(x,r))$ for each $x\in X$.
\end{proposition}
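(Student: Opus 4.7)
The plan is to split $X$ into three overlapping pieces where $g$ is controlled by an explicit model, verify volume doubling on each piece, and then combine these into a global estimate. Because $D$ is compact we may fix a tubular neighbourhood $U_D$ of $D$ on which, locally around every $p\in D$, the bi-Lipschitz bound $C^{-1}\gc \leq g \leq C \gc$ holds; by the ALE hypothesis, for $R_0$ sufficiently large, $g$ is uniformly bi-Lipschitz to the flat metric $g_0$ on the end $\{|z|>R_0\}$; and the intermediate annular region, between $U_D$ and the ALE end, is compact with $g$ smooth Riemannian.

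For the local doubling near $D$, the model metric $\gc = \mathbb{C}_{\beta_{i_1}} \times \cdots \times \mathbb{C}_{\beta_{i_k}} \times \mathbb{C}^{n-k}$ is a Riemannian product of flat two-cones and Euclidean factors. Each factor is a metric cone and hence satisfies volume doubling, and products of doubling metric measure spaces are doubling; concretely, volumes of $\gc$-balls scale like $r^{2n}$ with uniform multiplicative constants. Combined with compactness of $D$ and a finite cover by such charts, this yields a uniform doubling constant for balls contained in $U_D$. For the ALE end, the decay $\|g-g_0\|_{g_0}=O(|z|^{-\lambda})$ makes $g$ bi-Lipschitz to $g_0$ on $\mathbb{C}^n/\Gamma$ with constants close to $1$; since $(\mathbb{C}^n/\Gamma, g_0)$ is a Euclidean cone over $S^{2n-1}/\Gamma$, it is globally doubling. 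The compact intermediate region is doubling by a routine finite cover argument in smooth local coordinates.

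Finally, to merge these three local estimates into a single global constant, fix a threshold $r_0$ determined by the geometry (essentially the distance scale separating the three regions). If $r<r_0$, then $B_g(x,2r)$ either lies inside one of the three regions, or can be covered by a uniformly bounded number of balls in those regions (via a packing argument), so local doubling applies directly. If $r\geq r_0$, then both $\mu_g(B_g(x,r))$ and $\mu_g(B_g(x,2r))$ are comparable to $r^{2n}$, with multiplicative constants ultimately determined by the volume of the link $S^{2n-1}/\Gamma$, so the doubling ratio is bounded by a constant depending only on $n$ and $|\Gamma|$. The main obstacle is exactly this last combination step: verifying that the constants from the three local regimes can be absorbed into a single universal one uniformly across all $(x,r)$, particularly at transition scales where $B_g(x,r)$ straddles the compact singular core and the ALE end. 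The common $r^{2n}$ volume growth shared by the three models, together with the bi-Lipschitz comparisons, is what ultimately bridges them into a global doubling estimate.
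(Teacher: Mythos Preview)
Your proposal is correct in spirit and ultimately rests on the same fact as the paper's proof: uniform Ahlfors $2n$-regularity, i.e.\ $C_1^{-1} r^{2n} \leq \mu_g(B(x,r)) \leq C_1 r^{2n}$ for all $(x,r)$, from which doubling is immediate with $C = C_1^2\,2^{2n}$. The paper goes straight for this two-sided bound via a simple dichotomy on whether the centre $x$ lies in a fixed large compact set $K$ (and, for $x\notin K$, on whether $B(x,r)$ meets a smaller compact $K_0$); it never needs a separate three-region decomposition or a packing argument.

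One step in your outline does not work as stated and should be sharpened. Your combination mechanism for small $r$ --- ``cover $B(x,2r)$ by a bounded number of balls in those regions, so local doubling applies'' --- is not a valid inference. Local doubling within a region controls $\mu_g(B(y,2s))/\mu_g(B(y,s))$ only for balls \emph{entirely inside that region}; covering $B(x,2r)$ by such balls bounds $\mu_g(B(x,2r))$ from above by $\sum_j \mu_g(B(y_j,r))$, but gives no lower bound on $\mu_g(B(x,r))$, which is what you need. What actually rescues the argument is precisely the two-sided $r^{2n}$ volume estimate you invoke in your last paragraph. In other words, the detour through per-region doubling is superfluous --- the Ahlfors regularity you arrive at at the end is the whole proof, and you may as well establish it from the outset, as the paper does.
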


\begin{proof}
We will show that there is \(C_1>0\) such that for all \(x \in X\) and all \(r>0\),
\begin{equation}\label{eq:volgrowth}
   r^{2n}/C_1\leq \mu_g (B(x,r)) \leq C_1 r^{2n} \,.
\end{equation}
The proposition follows from this with constant \(C = C_1^2 \, 2^{2n}\) as
\[
\mu_g(B(x,2r))\leq C_1 \, 2^{2n} r^{2n}\leq C_1^2 2^{2n} \mu_g(B(x,r)) \, .
\]

To show \eqref{eq:volgrowth}, note that the local uniform equivalence together with the ALE condition ensure that for every compact set \(K \subset X\) there is a constant $C_K$ such that \eqref{eq:volgrowth} holds for all \(x \in K\) and all \(r>0\).
Let \(K_0\) be a compact set such that \(g\) is uniformly equivalent to the Euclidean metric \(g_0\) on \(X \setminus K_0\). Then \eqref{eq:volgrowth} clearly holds if \(B(x,r) \subset X \setminus K_0\).
Let \(\rho\) denote the distance to a fixed point \(p \in K_0\).
Take a larger compact set \(K_0 \subset K\) such that whenever \(x \notin K\) and \(B(x, r)\) intersects \( K_0\), one necessarily has \(r \geq r_*\) for some fixed \(r_*>0\), moreover \(B(x, r/2) \subset X \setminus K_0\), and \(\mu_g(K_0) \leq r^{2n}\). It follows from these properties that there is \(C'>0\) such that for all \(x \in X \setminus K\) and all \(r>0\) we have \(r^{2n}/C'\leq \mu_g (B(x,r)) \leq C' r^{2n}\). Equation \eqref{eq:volgrowth} follows from this by taking \(C_1 = \max \{C_K, C'\}\)\,.
\end{proof}

\begin{proposition}\label{prop:poincare}
    \((X, d_g, \mu_g)\) satisfies a local (1,2)-Poincar\'e inequality, i.e., there is \(\Lambda>1\) and a function \(c: (0, +\infty) \to (0, +\infty)\) such that for any \(R>0\) and any geodesic ball \(B \subset X\) of radius \(0 < r \leq R\) we have
    \begin{equation}\label{eq:poincare}
        \fint_B |f - f_B| d\mu_g \leq c(R) \, r \, \left( \fint_{\Lambda B} (\Lip f)^2 d\mu_g \right)^{1/2} \,,
    \end{equation}
    where \(f\) is a Lipschitz function on \(X\) and \(f_B = \fint_B f\) denotes its average.
\end{proposition}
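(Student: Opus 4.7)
The plan is to reduce the statement to a uniform local $(1,2)$-Poincar\'e inequality on small balls via the bi-Lipschitz controls built into the definition of an ALE conical K\"ahler metric, and then to upgrade to balls of arbitrary radius $\leq R$ by a standard chaining argument.

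For the local step, I would first verify that the model cone metric $\gc = \C_{\beta_1} \times \cdots \times \C_{\beta_k} \times \C^{n-k}$ on $\C^n$ satisfies a uniform $(1,2)$-Poincar\'e inequality on every ball. Each two-dimensional factor $\C_{\beta_i} = C(2\pi\beta_i)$ with $\beta_i \in (0,1)$ is an Alexandrov space of non-negative curvature, hence an \(RCD(0,2)\) space, so the Riemannian product is \(RCD(0, 2n)\) and inherits the Poincar\'e inequality with uniform constants. Combining this with the bi-Lipschitz equivalence $C^{-1}\gc \leq g \leq C\gc$ on a finite cover of a neighborhood of the compact divisor $D$, and with the uniform ALE bi-Lipschitz equivalence to the flat metric on $\C^n/\Gamma$ outside a compact set, I would deduce the existence of a uniform scale $r_0 > 0$ and constants $C_0, \Lambda_0$ such that the $(1,2)$-Poincar\'e inequality holds on every geodesic ball $B(x, r) \subset X$ with $r \leq r_0$.

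To upgrade this to balls of radius $\leq R$, I would invoke the classical chaining/upgrading theorem (Hajlasz--Koskela, Saloff-Coste, Jerison): on a length space satisfying volume doubling, a uniform local $(1,2)$-Poincar\'e inequality on a fixed scale $r_0$ implies the $(1,2)$-Poincar\'e inequality on all balls of radius $\leq R$, with a constant $c(R)$ depending on $R$. Volume doubling is exactly Proposition \ref{prop:voldoub}, and $(X, d_g)$ is a complete length space, as already noted.

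The main obstacle is the uniformity in the local step: a priori, the Poincar\'e constant on a small ball could degenerate depending on where the ball is centred (tip of a single cone, an SNC intersection point of several divisor components, smooth interior point, or deep in the ALE end). This is controlled by the compactness of $D$, which reduces the near-divisor analysis to finitely many bi-Lipschitz models of product cone type, together with the ALE decay giving uniformity at infinity. Once uniform local Poincar\'e is in hand the chaining step is a black box, and the inevitable divergence of $c(R)$ as $R \to \infty$ is consistent with the local (rather than global) nature of the statement.
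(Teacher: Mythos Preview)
Your proposal is correct and follows essentially the same route as the paper: establish a uniform $(1,2)$-Poincar\'e inequality at a fixed small scale $r_0$ via bi-Lipschitz equivalence to the model metrics (Euclidean or $\gc$) near $D$ and to the flat $\C^n/\Gamma$ in the ALE end, then upgrade to all radii $\leq R$ by the standard chaining/covering argument using the global volume doubling of Proposition~\ref{prop:voldoub}. The only cosmetic difference is that you justify Poincar\'e for the cone model $\gc$ via RCD theory for products of $2$-cones, whereas the paper simply observes that $\gc$ is uniformly equivalent to the Euclidean metric in polar coordinates, which is more elementary and avoids any appearance of circularity.
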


\begin{proof}
    We first show that there exist constants \(r_0>0\) and \(C_0 >0\) such that \eqref{eq:poincare} holds for any ball \(B \subset X\) of radius \(r \leq r_0=R\) with \(c(R) = C_0\) and \(\Lambda = 2\), say. Indeed, we can choose \(r_0>0\) small enough such that if \(\tilde{B} \subset X\) is a ball of radius \(r_0\) then the metric \(g\) is bi-Lipschitz on \(2\tilde{B}\) to the Euclidean metric or \(\gc\) with bi-Lipschitz constant \(2\), say. Since the Poincar\'e inequality holds for these models, we conclude that \eqref{eq:poincare} holds for some uniform constant \(C_0>0\) for all balls \(B \subset X\) of radius \(r \leq r_0\). Next, we claim that given any \(1 < \Lambda \leq 2\) and for any \(R>0\), we can choose a constant \(c(R)\) such that \eqref{eq:poincare} holds for any ball \(B \subset X\) of radius \(r \leq R\). This follows automatically from the volume doubling (Proposition \ref{prop:voldoub}) and a standard covering argument (see \cite[Lemma 5.3.1]{saloff} and \cite[Remark 2.23]{hondasun}). Namely, we can cover \(B\) by (a finite number depending only on \(R\)) of balls \(B_i\) of radius \(\leq r_0\) such that the half balls \(2^{-1}B_i\) are disjoint. Then a chaining argument as in \cite[(5.3.10) and (5.3.11)]{saloff} gives the result. This finishes the proof of the proposition.
\end{proof}

\subsection{Proof of Theorem \ref{ALE4d}}

We begin by proving the following extension of Theorem \ref{thm:Ricci}.

\begin{theorem}\label{thm:Ricci2}
    Let \(g\) be an ALE conical K\"ahler metric on \((X, \sum (1-\beta_i) D_i)\). Moreover, assume that \(g\) is H\"older continuous and that \(\Ric(g) \geq \lambda g\) on \(X^{\circ}\). Then \((X, d_g, \mu_g)\) is an \(RCD(\lambda, 2n)\) space.
\end{theorem}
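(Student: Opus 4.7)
The plan is to mirror the proof of Theorem \ref{thm:Ricci} from the compact case, but to invoke instead the local/non-compact quasi-smooth RCD characterization of Honda--Sun \cite{hondasun}, which is designed precisely for this setting. Compared to Honda's compact criterion used earlier, this result additionally requires global volume doubling and a local $(1,2)$-Poincar\'e inequality --- precisely the content of Propositions \ref{prop:voldoub} and \ref{prop:poincare} established above. So the job reduces to checking the same bouquet of local ingredients as before, now on a non-compact but well-behaved space.

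First I would verify that $(X, d_g, \mu_g)$ is almost-smooth in the sense of \cite[Definition 4.13]{hondasun}. Since $D$ is compact by hypothesis, the construction of \cite[p.~261]{mondello} used in Lemma \ref{lem:almostsmooth} applies verbatim and exhibits $D$ as a set of zero $2$-capacity. Next I would check the Sobolev-to-Lipschitz and $L^2$-strong compactness conditions by taking a cover consisting of: (a) bi-Lipschitz charts to the model $\gc$ near each point of $D$, (b) bi-Lipschitz charts to $\C^n/\Gamma$ on the ALE end, and (c) finitely many smooth Euclidean charts on any given compact subset of $X^{\circ}$. The arguments of Lemmas \ref{lem:SL} and \ref{lem:strongcompact} then go through locally: the key step for Sobolev-to-Lipschitz is concatenating local Lipschitz representatives along a minimizing geodesic, which exists for any two points by Hopf--Rinow applied to the complete locally compact length space $(X, d_g)$, and which, being compact, meets only finitely many charts.

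The Schauder estimates of \cite{guosong} recalled in Theorem \ref{thm:schauder} are purely local; combined with classical elliptic regularity on $X^{\circ}$ and ALE decay at infinity, they yield that Corollary \ref{cor:schauder} still holds, so weak eigenfunctions of $\Delta_g$ are locally Lipschitz. Together with $\Ric(g) \geq \lambda g$ on $X^{\circ}$, almost-smoothness, Sobolev-to-Lipschitz, $L^2$-strong compactness, Lipschitz regularity of eigenfunctions, global volume doubling and the local Poincar\'e inequality, Honda--Sun's characterization \cite{hondasun} delivers the $RCD(\lambda, 2n)$ conclusion.

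The main obstacle I anticipate is not the verification of any single ingredient --- each one follows from a close cousin of an argument already in the paper --- but the bookkeeping required to match the exact form of Honda--Sun's hypotheses (local versus global, which test class of functions, what kind of precompactness). Propositions \ref{prop:voldoub} and \ref{prop:poincare} were evidently set up to supply precisely the extra data beyond the compact case, so I expect each Honda--Sun hypothesis to correspond cleanly to one of the items above, and the remaining work to be essentially a careful translation rather than a genuinely new estimate.
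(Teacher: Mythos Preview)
Your overall strategy matches the paper's: invoke Honda--Sun's local characterization \cite[Theorem 1.1]{hondasun} and verify its hypotheses, with Propositions \ref{prop:voldoub} and \ref{prop:poincare} supplying the two new ingredients absent in the compact case. However, you misidentify one of the Honda--Sun hypotheses. Their criterion does \emph{not} ask for Lipschitz regularity of eigenfunctions (that was Honda's compact criterion \cite{honda}); it asks for the \emph{quantitative Lipschitz (QL) property of harmonic functions}: there is a uniform $C>0$ such that any harmonic $f$ on a ball $B$ of radius $r \leq r_0$ satisfies $\sup_{\frac{1}{2}B} |\nabla f| \leq (C/r) \fint_B |f|\, d\mu_g$. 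The paper establishes this by combining the standard Euclidean gradient estimate (for balls away from $D$, using the ALE asymptotics) with Guo--Song's gradient bounds for $\gc$-harmonic functions near $D$ \cite[Lemma 3.2]{guosong}, together with De Giorgi--Nash--Moser to pass from oscillation to the $L^1$-norm. The Schauder machinery you cite is indeed the right source of the needed estimate, but you have aimed it at the wrong target.

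Relatedly, the $L^2$-strong compactness you propose to check is not a Honda--Sun hypothesis, and in fact the global embedding $H^{1,2}(X) \hookrightarrow L^2(X)$ is \emph{not} compact on a non-compact space such as an ALE manifold, so that step would fail as written. You should drop it: the role it played in Honda's compact criterion is taken over in the Honda--Sun framework by semilocal volume doubling and the Poincar\'e inequality, which you already have via Propositions \ref{prop:voldoub} and \ref{prop:poincare}. With these two corrections --- replace ``eigenfunction Lipschitz'' by ``QL for harmonic functions'' and delete $L^2$-strong compactness --- your outline becomes essentially the paper's proof.
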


\begin{proof}
As in the compact case discussed previously, the metric measure space $(X, d_g, \mu_g)$ is an almost smooth complete length space. In order to apply the Honda-Sun uniform local characterization of RCD spaces \cite[Theorem 1.1]{hondasun}, several properties need to hold:

\begin{enumerate}[label=\textup{(\arabic*)}]
\item  Sobolev-to-Lipschitz (SL) property;
 \item semilocal volume doubling (item 1 in \cite[Definition 2.19]{hondasun});
\item  semilocal Poincar\'e inequality (item 2 in \cite[Definition 2.19]{hondasun}); 
\item  Quantative Lipschitz (QL) continuity of harmonic functions (see \cite[Definition 2.28]{hondasun});
\item  lower bound on the Ricci tensor on the smooth part (this is automatically satisfied by our assumptions).
\end{enumerate}

The SL property follows exactly as in the compact case since the metric on \(X\) is obtained as a continuous extension of the Riemannian metric \((X^{\circ}, d^{\circ}_g)\).
By Proposition \ref{prop:voldoub},  $(X, d_g, \mu_g)$ is global volume doubling, which implies (2). By Proposition \ref{prop:poincare}, item (3) is also satisfied. 


To prove the QL property it is enough to do so in balls of radius \(0 < r \leq r_0\) for some fixed \(r_0>0\) to be determined later (see \cite[Remark 2.30]{hondasun}), i.e., we need to show that there is a uniform constant \(C>0\) such that if \(B \subset X\) is a ball of radius \(0< r \leq r_0\) and \(f: B \to \R\) is a harmonic function then
\begin{equation}\label{eq:ql}
    \sup_{\frac{1}{2}B} |\nabla f \,| \leq \frac{C}{r} \fint_B |f| d\mu_g \,.
\end{equation}

Since the metric \(g\) is smooth on \(X \setminus D\) and the ALE property, the standard gradient estimates for harmonic functions in \(\R^{2n}\) imply \eqref{eq:ql} if \(B\) does not intersect \(D\). On the other hand, by our assumption that \(g\) is H\"older continuous near \(D\), we can take \(r_0\) small such that if \(\tilde{B} \subset X\) has radius \(r_0\) and intersects \(D\) then
\(\|g-\gc\|_{C^{\alpha, \beta}(\tilde{B})} < 1/100\), say.
It then follows from the gradient bounds for \(\gc\)-harmonic functions \cite[Lemma 3.2]{guosong} that \eqref{eq:ql} holds. Note that the Guo-Song estimates \cite[Equation 3.5]{guosong} bound the norm of the gradient in terms of the oscillation of the harmonic function; however, by De Giorgi-Nash-Moser the oscillation of a \(\gc\)-harmonic function is controlled by its \(L^1\)-norm.
This finishes the proof of (4).

Since \((X, d_g, \mu_g)\) satisfies the above (1)-(5) properties, by \cite[Theorem 1.1]{hondasun} \((X, d_g, \mu_g)\) is an \(RCD(\lambda, 2n)\) space.
\end{proof}

\begin{remark}
    These arguments would imply the result in the compact case as well, but we have nevertheless decided to give in the previous section the slightly different proof based on the more familiar theory of RCD spaces in the compact situation, hoping it can be beneficial for some readers.
\end{remark}

To conclude  with the proof of our second main Theorem \ref{ALE4d}, first recall the following result.

\begin{theorem} [Theorem 1, \cite{accy}] \label{ALE}
Let $\pi : X \to \mathbb{C}^n / \Gamma$ be a resolution that satisfies 
$K_X = \sum_{j=1}^{N} (\beta_j - 1) E_j$, with $E_j$ SNC and $\beta_j\in (0,1)$. Then for each K\"ahler class in $H^2(X, \mathbb{R})$ there is a unique  (Hölder continuous)  Ricci-flat K\"ahler metric $g_{\mathrm{RF}}$ with cone angle $2\pi \beta_j$ along $E_j$ for $j = 1, \dots, N$ such that
\begin{equation}
\left| \nabla^k_{g_C} \Big( (\pi^{-1})^* g_{\mathrm{RF}} - g_C \Big) \right|_{g_C} = O(r^{-2n-k})
\quad \text{for all } k \ge 0.
\end{equation}
\end{theorem}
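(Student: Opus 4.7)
The plan is to translate the Ricci-flat condition into a complex Monge--Amp\`ere equation on $X$ and solve it via the continuity method in weighted conical H\"older spaces that simultaneously encode the cone singularities along $\sum E_j$ and the ALE decay at infinity. Uniqueness within a fixed K\"ahler class will follow from the standard Yau-type integration-by-parts argument.

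First, for a given K\"ahler class $[\omega_0] \in H^2(X,\R)$, I would construct a reference K\"ahler metric $\omega_{\mathrm{ref}} \in [\omega_0]$ which coincides with $\pi^* g_0$ (the flat Euclidean orbifold metric on $\mathbb{C}^n/\Gamma$) outside a compact set $K \subset X$, and which in local charts centred at any point of $\sum E_j$ agrees with the model cone $\gc$ of \eqref{eq:gcone}. Such a reference is assembled by patching the explicit local cone models with the flat asymptotic end using cutoff functions, then adjusting by a closed $(1,1)$-form that realises the prescribed class; this combines the standard ALE reference construction of Joyce and Tian--Yau type with the conical reference metric construction of \cite{jmr,guenanciapaun}.

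I would then rewrite Ricci-flatness in the class $[\omega_{\mathrm{ref}}]$ as
\[
(\omega_{\mathrm{ref}} + i\ddb \varphi)^n = e^{f}\,\omega_{\mathrm{ref}}^n,
\]
where $e^{f}\omega_{\mathrm{ref}}^n$ is, up to a positive constant, the smooth non-degenerate volume form $\pi^{*}\bigl(i^{n^2}\,dz_1\wedge d\bar z_1\wedge\cdots\wedge dz_n\wedge d\bar z_n\bigr)$. The assumption $K_X = \sum(\beta_j-1)E_j$ is precisely what ensures this is globally smooth: in local charts around $\sum E_j$, the singular weights $|z_j|^{2(\beta_j-1)}$ of the cone potential are exactly cancelled by the Jacobian of $\pi$. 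The ALE condition on $\omega_{\mathrm{ref}}$ then forces $f\to 0$ at a polynomial rate at infinity. I would run the continuity method on the family $(\omega_{\mathrm{ref}} + i\ddb\varphi_t)^n = e^{tf + c_t}\omega_{\mathrm{ref}}^n$, $t\in[0,1]$, in weighted conical H\"older spaces $C^{2,\alpha,\beta}_{\delta}(X)$ whose elements decay like $r^{\delta}$ at infinity. Openness is then a consequence of the conical Schauder theory (Theorem \ref{thm:schauder}) combined with the fact that the linearised operator $\Delta_{\omega_t}$ is an isomorphism $C^{2,\alpha,\beta}_{\delta}(X)\to C^{0,\alpha,\beta}_{\delta-2}(X)$ for weights $\delta$ in the complement of the indicial set of the flat Laplacian on $\mathbb{C}^n/\Gamma$.

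The hard part will be the a priori estimates needed for closedness in this combined conical--ALE setting: a $C^0$ estimate via a non-compact Moser iteration that uses the decay of $f$ to absorb boundary contributions at infinity; a Laplacian estimate via a Chern--Lu/Aubin--Yau type inequality combined with cutoffs near $\sum E_j$, in the spirit of the conical frameworks of \cite{guenanciapaun,guosong}; and higher-order H\"older regularity then follows directly from Theorem \ref{thm:schauder}. Once existence is established, the sharp decay rate $O(r^{-2n-k})$ follows from an asymptotic expansion of $\varphi$ in spherical harmonics on $S^{2n-1}/\Gamma$: the polynomial decay of $f$ kills the intermediate indicial contributions, and the first indicial root compatible with a non-trivial source term gives the stated rate. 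For uniqueness, if $\omega_1,\omega_2$ are two solutions in the same K\"ahler class then $\omega_1^n = \omega_2^n$, and the standard integration-by-parts argument (valid here thanks to the polynomial decay of the potentials at infinity and the almost-smooth structure controlling the divisorial behaviour) forces $\omega_1 = \omega_2$; the normalised potential difference is then pluriharmonic with decay, hence identically zero.
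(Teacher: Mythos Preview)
The paper does not prove this theorem: it is stated as ``Theorem 1, \cite{accy}'' and simply quoted from that reference, then applied in the proof of Theorem \ref{ALE4d}. So there is no proof in the paper to compare your proposal against.

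That said, your outline is a reasonable sketch of the strategy one expects in \cite{accy}: reduce to a complex Monge--Amp\`ere equation for a reference metric that is conical along $\sum E_j$ and flat at infinity, run the continuity method in weighted conical H\"older spaces, and extract the decay rate from the indicial analysis of the flat Laplacian on $\mathbb{C}^n/\Gamma$. A few points where your sketch is vague or optimistic: building the reference metric that is \emph{simultaneously} exactly $\pi^*g_0$ outside a compact set and exactly $\gc$ near the exceptional set, while staying in the prescribed K\"ahler class, is not quite as simple as ``patching with cutoffs and adjusting by a closed $(1,1)$-form''; one typically works instead with a reference that is only asymptotically close to these models and absorbs the errors into $f$. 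Also, the $C^0$ estimate in the non-compact conical setting is genuinely delicate---Moser iteration requires a Sobolev inequality uniform across both the conical locus and the ALE end, and this is where most of the work in \cite{accy} lies. Your uniqueness argument is fine in spirit, but the integration by parts needs the potential difference to decay, which you have, and the cutoff near $D$ to not contribute, which is where the almost-smooth/zero-capacity property is used. None of these are fatal to your plan, but they are exactly the points where a referee would ask for detail.
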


Thus:

\begin{proof}[Proof of Theorem \ref{ALE4d}]
    The case $\Gamma \subset SU(2)$ is given by the classical construction of (smooth) hyperk\"ahler ALE metrics by Kronheimer \cite{kronheimer}. The general case follows instead by applying Theorem \ref{thm:Ricci2}. for the metrics given by Theorem \ref{ALE}  on the \textit{minimal} resolution (which, in particular, ensures that the cone angles on the exceptional set are all less than $2\pi$)  of any isolated quotient singularity $\mathbb{C}^2/\Gamma$, with $\Gamma\subset U(2)$, combined with the fact that any finite subgroup of $SO(4)$ acting freely on the sphere is orthogonally conjugated to a subgroup in $U(2)$ (see, e.g.,  Proposition $A.1$ in \cite{zhou2024family}). 
\end{proof}

\bibliographystyle{alpha}
\bibliography{refs}

\Address

\end{document}